\date{}
\newcommand{\lyxdot}{.}
\numberwithin{equation}{section}
\numberwithin{figure}{section}
\theoremstyle{plain}
\newtheorem{thm}{\protect\theoremname}
  \theoremstyle{plain}
  \newtheorem{algorithm}[thm]{\protect\algorithmname}
  \theoremstyle{plain}
  \newtheorem{prop}[thm]{\protect\propositionname}
  \providecommand{\algorithmname}{Algorithm}
  \providecommand{\propositionname}{Proposition}
\providecommand{\theoremname}{Theorem}
\begin{document}

\title{Optimal Sensor Positioning (OSP); A Probability Perspective Study}

\author{Sung Ha Kang, Seong Jun Kim and Haomin Zhou%
\thanks{School of Mathematics, Georgia Institute of Technology, Atlanta, GA
30332, USA email: kang@math.gatech.edu; skim396@math.gatech.edu; hmzhou@math.gatech.edu
\newline
\hspace*{0.2in}This work is partially supported by NSF Awards DMS\textendash{}1042998, DMS\textendash{}1419027, ONR Award N000141310408, and Simons Foundation grant 282311.
}}
\maketitle
\begin{abstract}
We propose a method to optimally position a sensor system, which consists of multiple sensors, each has limited range and viewing angle, and they may fail with a certain failure rate.  
The goal is to find the optimal locations as well as the viewing directions of all the sensors and achieve the maximal surveillance of the known environment. 

We setup the problem using the level set framework. Both the environment and the viewing range of the sensors are represented by level set functions. Then we solve a system of ordinary differential equations (ODEs) to find the optimal viewing directions and locations of all sensors together.   
Furthermore, we use the intermittent diffusion, which converts the ODEs into stochastic differential equations (SDEs), to find the global maximum of the total surveillance area. 
The numerical examples include various failure rates of sensors, different rate of importance of surveillance region, and 3-D setups. They show the effectiveness of the proposed method. 
\end{abstract}

\section{Introduction\label{sec:Introduction}}

We propose a computational method to position a surveillance system with multiple sensors, where each sensor has a limited viewing angle and range.  Some of the sensors may have a non-zero failure rate.  Our goal is to achieve the maximum surveillance of a complicated environment. 
 
%the greatest surveillance area of the region with a given number of multiple sensors which possess finite range and limited coverage angle. Moreover, we consider a scenario when the failure of sensors occurs with probability and then show how to optimize the placements which are robust against these possible failures.

The increasing societal demands for security lead to a growing need for surveillance activities in various environments, such as public places like transportation infrastructures and parking lots, shopping malls and telecommunication relay towers. How to optimally position the sensors is an interesting, important and maybe challenging problem, which can directly impact the public 
safety and security. As reported in  \cite{Murray2007133}, the optimal position of sensors 
may also provide better resources allocation and system performance.

In practice, sensors used in many applications often come with limited sensing ability, for example, 
an object cannot be detected if its distance to the sensor is too far (larger than $r>0$) or it is located outside of the sensors viewing angle ( $[v,v+\theta]$, $0<\theta<2\pi$, where $v$ is a viewing direction and $\theta$ is a limited width of viewing angle). There may exist obstacles that block the view of the sensors. 
In addition, with certain probability, sensors may fail. %Thus, one must consider the hided areas due to the presence of obstacles.

There exist many related studies in optimal sensor positioning. 
%The optimal sensor positioning is a challenging problem.  
Using graph-based approaches, this problem can be posed as a combinatorial optimization problem which has been shown to be NP-hard \cite{KSG2008}. Thus, simple enumeration and search techniques, as well as general purpose algorithms may experience extraordinary difficulty 
in some cases. The placements of sensors have been undertaken for the well-known \emph{art gallery} problem, a classical problem that aims at placing stationary observers to maximize the surveillance area \cite{Aggarwal:1984:AGT:911725} in a cluttered region. The art gallery problem and its variations have drawn considerable attention in recent decades, for example in sensor placements
\cite{959341,Erdem04optimalplacement,Book:Foresti,959342,SISC2}. Similar studies have 
been conducted to other problems such as routes for patrol guards with limited visibility 
or limited mobility. We refer readers to a book \cite{MR921437} and papers
\cite{SISC1,POLSA,She92,Urrutia00artgallery} for more references therein. 
Problems with possible sensor failures have also been studied. For example, 
the use of a Bayesian approach is discussed in \cite{Cameron01101990}, incorporating
environmental data, such as temperatures, from multiple sensors to gather spatial 
statistics can be found in \cite{Book_Cressie}. 
A Gaussian process model is widely used to predict the effect of placing sensors
at particular locations \cite{DCI2002,KSG2008,Worden2001885,ZS_spatial,Zimmerman}. %for taking care of imprecise detections.
See also \cite{FGGH2015} for optimally placing unreliable sensors in a $1$-dimensional environment.

%\cite{Martinez2006661}: Optimal sensor placement and motion coordination for target tracking

Despite of the existence of extensive literature on related topics, the problem that we are 
interested in remains to be challenging.  The main difficulties often come from following aspects: (1) The obstacles have arbitrary shapes. (2) Finding a feasible placement with largest coverage area is often costly because it is an infinite dimensional problem. (3) Finding the globally optimal position, if possible, is usually computationally intractable.  In addition, we consider the case of each sensor having a certain failure rate. 

In this paper, we tackle the problem using the level set formulation together with the intermittent diffusion, a stochastic differential equation (SDE) based global optimization strategy \cite{ID}. The level set framework gives us the freedom to consider various shapes of obstacles and a way to incorporate the limited coverage into the definition of the coverage level set function.
%The total coverage area of multiple sensors can be easily calculated by integrating each sensor's coverage level set function. 
The intermittent diffusion is then used to find one of the optimal viewing directions and locations in all of the feasible positions which gives the largest surveillance area of the environment. % See also our recent work \cite{POLSA}.

The layout of the paper is as follows. In Section 2, the level set formulation of limited sensing range and viewing angle is introduced.  We propose methods to find optimal locations and viewing directions in this setting.  To have a possibility of finding a global optimum, we use the intermittent diffusion. 
In Section 3, we describe the details of having a nonzero failure rate for the sensors and optimal condition for surveilling an area.  Numerical implementation details and different applications are given in Section 4. We conclude by giving a summary and discussion in Section 5.

\section{Optimal positioning of the sensors\label{sec:Coverage-optimization-GD}}

In this section, we  introduce the level set formulation for the coverage function of sensors with limited sensing range and limited width of viewing  angle.  Then we propose a strategy to find the optimal viewing directions of multiple stationary sensors to achieve the largest surveillance of the environment. Finally, we introduce the intermittent diffusion to find the globally optimal placement.

\subsection{Level set formulation for the sensors coverage\label{sub:Level-set-formulation}}

In the level set framework, the environment is described by a level set function, say $\psi(x)$, with positive values outside of the obstacles, negative values inside, and the zero level curves representing boundaries of obstacles.  

A related application on visibility problem uses this level set setting \cite{CT:CMS2005,TCOBS:JCP2004},  where light rays from a vantage point travel in straight lines and are obliterated
upon contact with the surface of an obstacle. A point is seen by the observer (or sensor) placed at a given vantage point, if the line segment between the point and the vantage point does not intersect
any of the obstructions.   In existing work \cite{POLSA,Tsai_robotic_path,LT:2008,LTC:2006} using such a setting, the observer has either a circular coverage (viewed) region, or an infinite range.  
In this paper, we further consider the case of  limited width of viewing angle with a finite range of coverage region.

We denote the computational domain as $D$,
a compact subset of $\mathbb{R}^{2}$. Let $\Omega$ be the collection
of obstacles, i.e. a closed set containing finite number of connected
components comprising one or multiple given obstacles in $D$. The
level set function $\psi(x)$ represents the environment: inside the
obstacles $\Omega$ is negative, and outside is positive. In our algorithms,
we define $\psi(x)$ as the typical signed distance function from the
boundaries of $\Omega$. A sensor located at a point $x$ in $D\backslash\Omega$,
has the line-of-sights as straight line segments originated from $x$
and ended at $y$ with either $|x-y|=r$ and $y\in D\backslash\Omega$,
or $|x-y|\le r$ and $y$ is a point on the boundary of the obstacles.
The coverage region of the sensor is the union of all line-of-sights
emanating from the sensor and the angle to the horizontal axis is
in $[v,v+\theta].$ Figure \ref{fig:A-sensor} depicts the coverage
area of the sensor in the setup. 

\begin{figure}
\begin{centering}
\includegraphics[width=4cm]{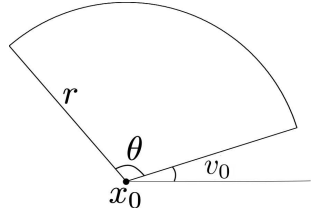}
\par\end{centering}

\caption{\label{fig:A-sensor} A sensor with a finite sensing range and a limited width viewing angle. The coverage area of the sensor is described by a circular sector. $x$ is the location of the sensor, $r$ is the limited sensing range,  $\theta$
is the limited width of viewing angle, and $v$ is the viewing direction.
}
\end{figure}

We use a normalized viewing vector $\vec{n}_{x}(y)=\frac{y-x}{\left|y-x\right|}$
which connects an initial point $x$ with a terminal point $y$. Then,
we define the sensor coverage as a level set function $\phi$ given
by 

\begin{equation}
\phi(y;x,r,v)=\underset{z\in\mathcal{L}(x,y)}{\min}\left\{ \psi(z),r-|z-x|,\varphi(x,y)\right\} \label{eq:visibility-level-set-fcn}
\end{equation}
where $\mathcal{L}(x,y)$ is the line segment connecting $x$ and
$y$, and $\varphi(x,y)=1$ if the angle of $\vec{n}_{x}(y)$ from
the horizontal axis lies in $[v,v+\theta]$ and $\varphi(x,y)=-1$,
otherwise. This way the function $\phi(\cdot;x,r)$ represents the
coverage in a bounded domain $D$ by
\begin{equation}
\begin{cases}
\phi(\cdot;x,r,v)>0, & \mbox{covered regions,}\\
\phi(\cdot;x,r,v)\leq0, & \mbox{non-covered regions.}
\end{cases}\label{eq:visible}
\end{equation}

With the coverage level set function $\phi(y;x,r,v)$ we can calculate
the coverage area of the sensor at $x$ with the viewing direction $v$
by 
\[
V(x,v)=\int_{D}H(\phi(y;x,r,v))dy
\]
where $H$ denotes the one-dimensional Heaviside function.

To compute the coverage function $\phi(y;x,r,v)$, we present a simple
strategy in Algorithm~\ref{PDE-based-algorithm} which follows
the PDE based strategy presented in \cite{TCOBS:JCP2004}. The main
idea is to use the method of characteristics for the nonlinear first
order PDEs, and we modify to handle the finite sensing
range and the limited width of viewing angle. 

\bigskip
\begin{algorithm}
\noindent \textup{\caption{PDE based algorithm to compute coverage}\label{PDE-based-algorithm}}

Input: level set function $\psi$, a sensor $x\in D\backslash\Omega$,
radius $r$, and angles $v,\;\theta$.

1. Set $\phi(y)=\psi(y)$ for $y\in D$.

2. If $y\in B_{r}(x),$ solve for $\phi$:
\begin{equation}
\nabla_{y}\phi(y;x,r,v)\cdot\vec{v}_{x}(y)=0,\label{eq:1st-PDE}
\end{equation}
subject to the boundary condition $\phi=\psi$. 

$\quad$If else, set $\phi(y;x,r,v)=-1$.

3. Update $\phi=\min(\phi,\psi,r-|y-x|,\varphi(x,y))$.
\end{algorithm}
We solve (\ref{eq:1st-PDE}) using the upwind finite differencing
scheme. Note the computational workload in each iteration is dominated
by the second step. However, these steps are only needed for computing
$\phi$ inside the sensor's coverage range, so we can save the computations.

Generalization of the above setting for multiple sensors is immediate.
Let $\{x_{1},\cdots,x_{m}\}$ denote the location of $m$ sensors.
We also let $\{r_{1},\cdots,r_{m}\}$ be the sensing range of the individual sensors,
$\{\theta_{1},\cdots,\theta_{m}\}$ be the limited width of viewing angle, and $\{v_{1},\cdots,v_{m}\}$
be the viewing direction of each, respectively. The coverage of multiple
sensors is the union of the coverage of all sensors. Similar to (\ref{eq:visibility-level-set-fcn}),
we define the coverage level set function with respect to multiple
sensors by
\[
\phi(y;x_{1},\cdots,x_{m},r_{1},\cdots,r_{m},v_{1},\cdots,v_{m})=\underset{i=1,\cdots,m}{\max}\phi(y;x_{i},r_{i},v_{i})
\]
where the associated $\varphi(x_{i},y)$ in $\phi$ is 1 if the angle
of $\vec{n}_{x_{i}}(y)$ from the horizontal axis is in $[v_{i},v_{i}+\theta_{i}]$
and $\varphi(x_{i},y)=-1$, otherwise. We apply Algorithm \ref{PDE-based-algorithm}
to compute $\phi(y;x_{i},r_{i},v_{i})$ with respect to each sensor,
and then take the union to obtain the combined coverage. Moreover,
the coverage area of multiple sensors with respect to the coverage
range $\{r_{1},\cdots,r_{m}\}$ and viewing angles $\{v_{1},\cdots,v_{m}\}$
is given by 
\begin{equation}
V(x_{1},\cdots,x_{m},v_{1},\cdots,v_{m})=\int_{\Omega}H(\phi(y;x_{1},\cdots,x_{m},r_{1},\cdots,r_{m},v_{1},\cdots,v_{m}))dy.\label{eq:joint_visible_volume_function}
\end{equation}
We note that $V$ is Lipschitz continuous under the condition that
there is no fattening in the level set function $\phi$ near the boundary of the obstacles.
%its zero level set.
The details can be found in \cite{CT:CMS2005}.

In the following sections, we focus on maximizing this coverage area with multiple sensors.

% \subsection{Intermittent diffusion?}
 
\subsection{Optimal viewing direction (fixed locations)\label{subsec:Optimal-displacement-stationary}}

We propose a procedure for the optimal position of multiple
stationary sensors to maximize the surveillance area. Our approach incorporates
the previous level set formulation into the optimization methods.
We first consider the case when the location of sensors is fixed and only the viewing direction is allowed to change. The goal is to find the optimal viewing direction of each sensor that maximizes the coverage
area $V$ in (\ref{eq:joint_visible_volume_function}) with fixed
locations $\{x_{1},\cdots,x_{m}\}$. We use the gradient
ascent method, i.e., following the gradient flow to steer the viewing direction:
\begin{equation}
\partial_{t}v_{i}=\nabla_{v_{i}}V(v_{1},\cdots,v_{m}),\; i=1,2,\cdots,m,\label{eq:gd_angle}
\end{equation}
where $\nabla_{v_{i}}$ is the gradient operator with respect to the angle $v_{i}$.

To solve (\ref{eq:gd_angle}) numerically, we use the forward difference
to approximate the time derivative and the central difference to approximate
the gradient operator and, which leads to
\begin{equation}
D_{+}^{k}v_{i}=D_{0}^{h_{v}}V(v_{1},\cdots,v_{m}),\; i=1,2,\cdots,m,\label{eq:gd_central}
\end{equation}
where $k$ is a temporal step size and $h_{v}$ is the step size for
the angle. 
The gradient of $V$ requires
to compute $\phi(y;x_{i},r_{i},v_{i}\pm h_{v})$. We use
the Algorithm \ref{PDE-based-algorithm} for computing $\phi$ over
the grid with equally spaced points with a spacing of spatial step
size $h$ in the computational domain $D$. 
In the evaluation of V, we numerically
integrate $H(\phi)$ using a trapezoidal rule over $D$ and
regularize the Heaviside function by the method proposed in \cite{ETT:2005}:
\[
H_{\epsilon}(\phi)=\begin{cases}
1, & \phi\geq\epsilon,\\
\frac{1}{2}(1+\frac{\phi}{\epsilon}), & |\phi|<\epsilon,\\
0, & \phi\leq-\epsilon,
\end{cases}
\]
using the point-wise scaling with $l_1$ norm of $\nabla\phi$, $\epsilon=\frac{h}{2}\left|\nabla\phi\right|_{l_{1}}$.

The gradient method (\ref{eq:gd_central}) finds a local rather than
a global optimum. To find the globally optimal solution, we employ
the intermittent diffusion method \cite{ID} which adds random perturbations
to the gradient flow (\ref{eq:gd_central}). This helps the deployment
move out of local traps and have a chance to find the globally optimal
one. The following SDE is proposed to be solved before the gradient
ascent flow,
\begin{equation}
dv_{i}(t)=\nabla_{v_{i}}V(v_{1},\cdots,v_{m})dt+\sigma(t)dW(t),\; i=1,\cdots m,
\label{eq:id_sde}
\end{equation}
where $W(t)$ is the Brownian motion in $\mathbb{R}^{m}$, and $\sigma(t)$
is a piecewise constant function alternating between zero and a random
positive constant. The random component perturbs the initial condition
for the gradient ascent so that the solution can escape from the trap
of a local maximizer and approach other ones. 

%We present the steps to approximate the globally optimal solution in Algorithm \ref{alg:Multiple-observer-algorithm}.
%using the intermittent diffusion. 
\begin{algorithm}[H]
\textup{\caption{Optimal viewing direction for globally maximal coverage}\label{alg:Multiple-observer-algorithm}}

\textbf{Input}: sensors $\{x_{1},\cdots,x_{m}\}$, radii
of coverage $\{r_{1},\cdots,r_{m}\}$, viewing directions $\{v_{1},\cdots,v_{m}\}$,
limited widths $\{\theta_{1},\cdots,\theta_{m}\}$, step size for location
$h_x$, step size for angle
$h_{v}$, and temporal step size $k$.

\textbf{Initialize}: Randomly locate the sensors on the allowable area. %boundary of obstacles.

\textbf{Repeat}: Set $j=1$ and iterate $N$ times to obtain $N$
different sets of positioning.

$\quad$$\quad$1. Set $\alpha$ as the scale for diffusion strength,
and $\gamma$ the scale for diffusion time. Let $\sigma:=\alpha d$, 

$\quad$$\quad$and $T:=\gamma t$ where two positive random numbers
$d$, $t$ within $[0,1]$ by uniform distribution.

$\quad$$\quad$2. Set the optimal deployment $L_{opt}=(v_{1},\cdots,v_{m}).$

$\quad$$\quad$3. Taking $L_{opt}$ as the initial condition, compute
the SDE for $t\in[0,T],$ $i=1,\cdots,m.$
\[%\begin{equation}
dv_{i}(t)=\nabla_{v_{i}}V(v_{1},\cdots,v_{m})dt+\sigma(t)dW(t),\; i=1,\cdots m,\label{eq:sde_id}
\]%\end{equation}
$\quad$$\quad$and record the final configuration \textup{as $L_{ini}=(v_{1},\cdots,v_{m}).$}

$\quad$$\quad$4. Compute the gradient ascent flow until convergence
with the initial condition $L_{ini}$ 
\[%\begin{equation}
\partial_{t}v_{i}=\nabla_{v_{i}}V(v_{1},\cdots,v_{m}),\; i=1,2,\cdots,m\label{eq:ga_id}
\]%\end{equation}
$\quad$$\quad$and record the final configuration as $L_{candidate}.$

$\quad$$\quad$5. If $V(L_{candidate})>V(L_{opt})$, set $L_{opt}=L_{candidate}$.

$\quad$$\quad$6. Repeat with $j=j+1.$
\end{algorithm}

One iteration of the procedure is to solve a SDE \eqref{eq:id_sde}
for $t\in[0,T]$ and then compute the gradient ascent flow \eqref{eq:gd_central} until convergence.
This procedure finds one of the globally optimal solutions with probability arbitrarily close to one. 
Indeed, denoting $\alpha\in(0,1)$ by the probability that one iteration attains a close approximation to the globally optimal solution, 
the rate of convergence is proven to be $1-(1-\alpha)^N$ with $N$ iterations. See \cite{ID} for further details.

Figure \ref{fig:multi-stationary-id} illustrates a result of the
algorithm applied to 8 stationary sensors to cover the environment
with polygonal obstacles. For simplicity, we assumed that all sensors have the same sensing range $r$ and the width of viewing angle $\theta$. In
subfigure (a), the obstacles are the shaded three polygons, and the
locations of each sensor are fixed at red crosses. The dotted sector
represents the sensor's coverage if any obstruction is not given by
an obstacle. Otherwise, the red (shaded) area inside the sector shows
the coverage. The objective is now translated into finding the viewing directions such that the union of each coverage is largest. Algorithm
\ref{alg:Multiple-observer-algorithm} seeks one of the desired sets
of the viewing direction $\{v_{1},\cdots,v_{m}\}$ which globally maximizes
(\ref{eq:joint_visible_volume_function}). The final result is presented
in subfigure (b) with $N=80$ iterations. 

\begin{figure}[h]
\begin{centering}
\subfloat[Initial visible volume is 0.9678.]{\begin{centering}
\includegraphics[width=4cm]{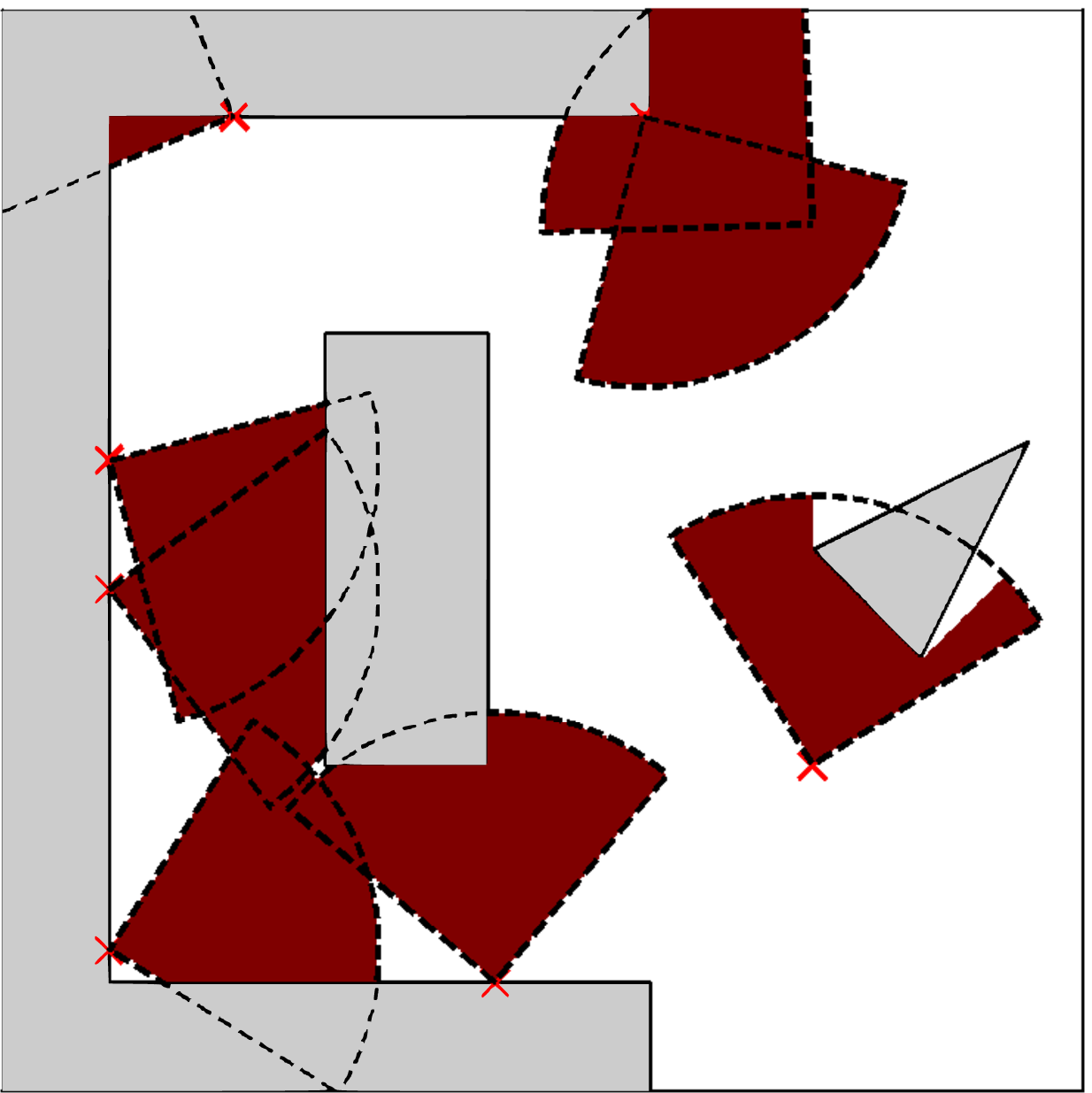}
\par\end{centering}

}\subfloat[Final visible volume is 1.4184.]{\begin{centering}
\includegraphics[width=4cm]{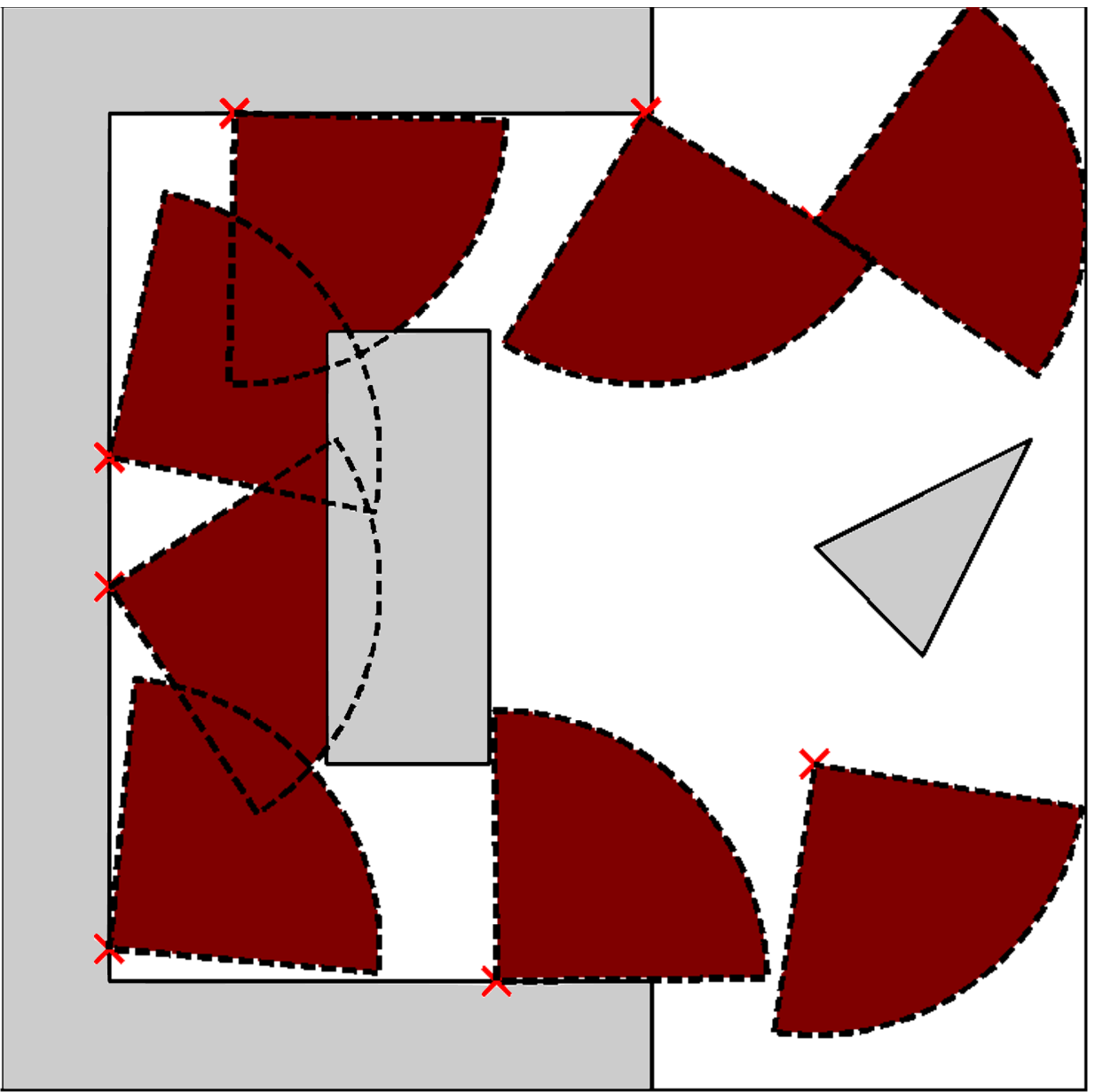}
\par\end{centering}

}\subfloat[]{\begin{centering}
\includegraphics[width=4cm]{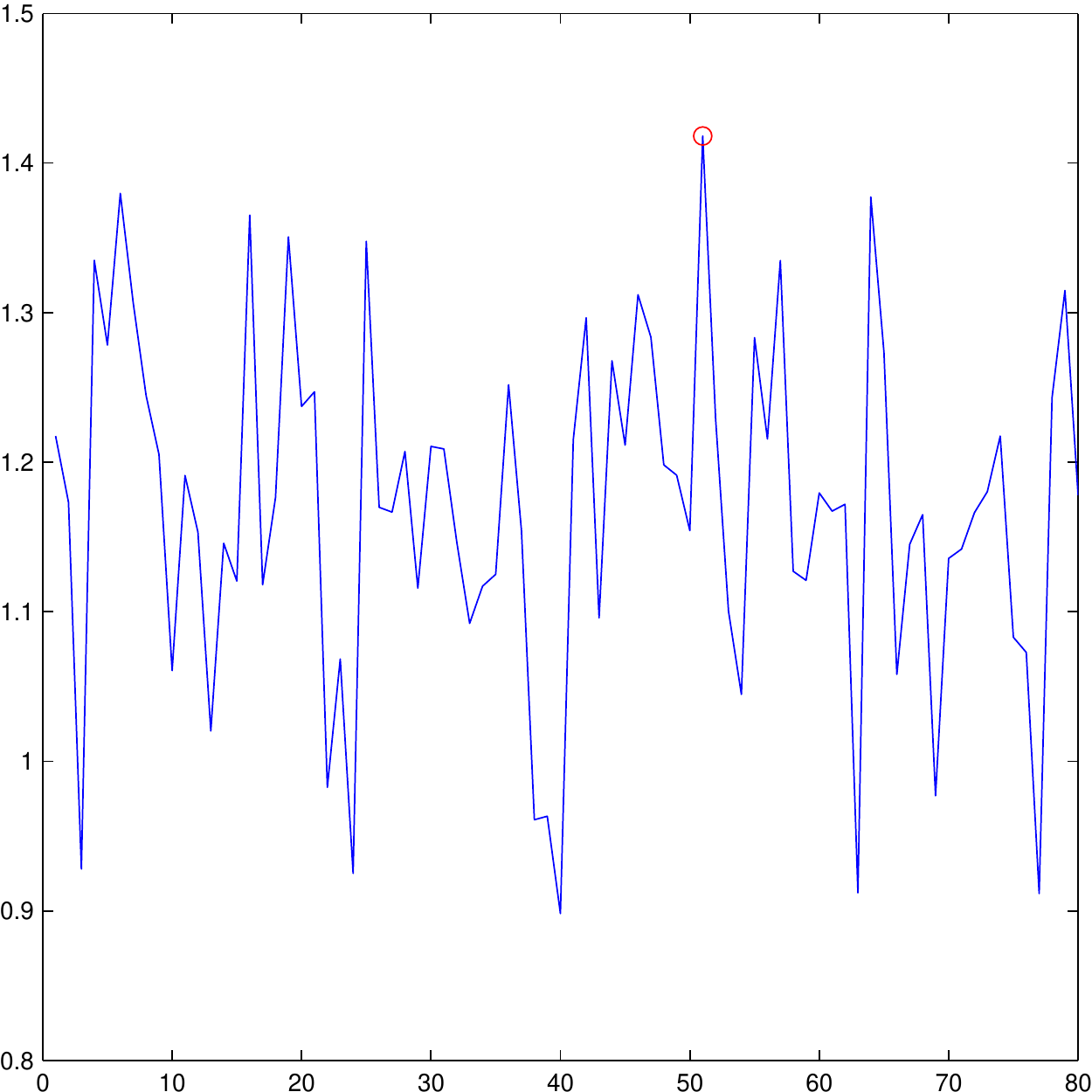}
\par\end{centering}

}
\par\end{centering}

\caption{\label{fig:multi-stationary-id}Optimal viewing directions for multiple sensors
with fixed locations. The viewing directions of multiple sensors
change from the image (a) to the image (b) by Algorithm \ref{alg:Multiple-observer-algorithm}.
(c) The value of coverage area with respect to the iteration. The
maximum coverage is achieved at the 51th iteration. Parameters are
chosen as $r=0.5$, $h=0.005$, $\theta=\pi/2$, $h_{v}=4h$, and
$h_{x}=h$.}
\end{figure}

\subsection{Optimal Sensor Positioning  (OSP)\label{subsec:Optimal-displacement-mobile}}

We now consider the case where one can adjust the location of sensors. %on the boundary of the obstacles. 
The objective is extended to find optimal locations as well as viewing directions that globally maximize the coverage area $V$
in (\ref{eq:joint_visible_volume_function}). The optimal positioning  problem can be reformulated as a $2m$-dimensional problem:
\[
\underset{x_{i},v_{i}\in\mathcal{A}}{\max}V(x_{1},\cdots,x_{m},v_{1},\cdots,v_{m})
\]
where $\mathcal{A}$ is the feasible set of sensors' locations and
viewing directions. Typical locations of the sensors are on the boundary of obstacles $\partial\Omega$. 

Among the sensors, we denote a subset of sensors whose locations are adjustable
by $\{\tilde{x}_{1},\tilde{x}_{2},\cdots,\tilde{x}_{\tilde{m}}\}$,
i.e., we take $\{\tilde{x}_{1},\tilde{x}_{2},\cdots,\tilde{x}_{\tilde{m}}\}$
$\subseteq$ $\{x_{1},\cdots,x_{m}\}$, and each $\tilde{x}_{\tilde{i}}$
is uniquely identified with some sensor $x_{i}$.  Extending the
previous approach for fixed sensors, we employ the intermittent
diffusion and the gradient ascent to maximize the coverage area. However,
due to the adjustability of some sensors, a parametrization of the
boundary of obstacles is required to conduct the optimization with
respect to the location. This can be
done by using a reinitialization method given in \cite{TO:Acta-Numerica:2005} and parameterizing zero level curves which represent boundaries
of obstacles.

Now, the gradient ascent is used for the modified equation of (\ref{eq:gd_angle}):
\begin{eqnarray}
\partial_{t}v_{i} & = & \nabla_{v_{i}}V(x_{1},\cdots,x_{m},v_{1},\cdots,v_{m}),\; i=1,2,\cdots,m,\nonumber \\
\partial_{t}\tilde{x}_{j} & = & \nabla_{\tilde{x}_{j}}V(x_{1},\cdots,x_{m},v_{1},\cdots,v_{m}),\; j=1,2,\cdots,\tilde{m},\label{eq:gd_mobile}
\end{eqnarray}
where $\nabla_{v_{i}}$ is the gradient operator with respect to the viewing direction $v_{i}$,
and $\nabla_{\tilde{x}_{j}}$ is the gradient operator with respect
to the location $\tilde{x}_{j}$ along the boundary of obstacle where the sensor
$\tilde{x}_{j}$ lies in. 

As in (\ref{eq:gd_angle}), when the solution reaches a steady state, this gives one of many local optimal solutions.  To find a globally optimal solution, we use the intermittent diffusion. It adds random perturbations intermittently to the gradient flow (\ref{eq:gd_mobile})
which leads to the following SDEs, 
\begin{eqnarray}
\partial_{t}v_{i} & = & \nabla_{v_{i}}V(x_{1},\cdots,x_{m},v_{1},\cdots,v_{m})dt+\sigma_{1}(t)dW_{1}(t),\; i=1,2,\cdots,m,\nonumber \\
\partial_{t}\tilde{x}_{j} & = & \nabla_{\tilde{x}_{j}}V(x_{1},\cdots,x_{m},v_{1},\cdots,v_{m})dt+\sigma_{2}(t)dW_{2}(t),\; j=1,2,\cdots,\tilde{m},\label{eq:ID_loca-1}
\end{eqnarray}
where $W_{1}(t)$ and $W_{2}(t)$ are the Brownian motions in $\mathbb{R}^{m}$
and $\mathbb{R}^{\tilde{m}}$, respectively. In addition, $\sigma_{1}(t)$
and $\sigma_{2}(t)$ are piecewise constant functions alternating
between zero and a random positive constant. This set of SDEs is followed
by the gradient ascent which can escape from the trap of a local minimizer
and approach other ones. 

%Now, we are ready to present the algorithm to compute the globally optimal deployment of multiple sensors in 
Algorithm \ref{alg:Multiple-movable observer-algorithm} presents Position Optimization for Sensors (POS) for maximal coverage area. Here we use the term positioning to describe both locations and  viewing directions.

\begin{algorithm}
\textup{\caption{OSP}\label{alg:Multiple-movable observer-algorithm}}

\textbf{Input}: sensors $\{x_{1},\cdots,x_{m}\}$, radii
of coverage $\{r_{1},\cdots,r_{m}\}$, viewing directions $\{v_{1},\cdots,v_{m}\}$,
limited widths $\{\theta_{1},\cdots,\theta_{m}\}$, step size for location
$h_{x}$, step size for angle $h_{v}$, and temporal step size
$k$. Also, adjustable sensors\textup{ $\{\tilde{x}_{1},\tilde{x}_{2},\cdots,\tilde{x}_{\tilde{m}}\}\subseteq\{x_{1},\cdots,x_{m}\}.$}

\textbf{Initialize}: Randomly locate the sensors on the allowable area. %Randomly locate adjustable sensors on the boundary of obstacles.

\textbf{Repeat}: Set $j=1$ and iterate $N$ times to obtain $N$
different deployments. 

$\quad$$\quad$1. Set $\alpha$ as the scale for diffusion strength,
and $\gamma$ the scale for diffusion time. Let $\sigma:=\alpha d$, 
and $T:=\gamma t$ where two positive random numbers
$d$, $t$ within $[0,1]$ by uniform distribution.

$\quad$$\quad$2. Set the optimal deployment $L_{opt}=(x_{1},\cdots,x_{m},v_{1},\cdots,v_{m}).$

$\quad$$\quad$3. Taking $L_{opt}$ as the initial condition, compute
the SDE (\ref{eq:ID_loca-1}) for $t\in[0,T]$ and record the 
final configuration \textup{as $L_{ini}=(x_{1},\cdots,x_{m},v_{1},\cdots,v_{m}).$}

$\quad$$\quad$4. Compute the gradient ascent flow (\ref{eq:gd_mobile})
until convergence with the initial condition $L_{ini}$ and record the final configuration as $L_{candidate}.$

$\quad$$\quad$5. If $V(L_{candidate})>V(L_{opt})$, set $L_{opt}=L_{candidate}$.

$\quad$$\quad$6. Repeat with $j=j+1.$
\end{algorithm}

Evaluations of $\nabla_{0}^{h_{v}}V$ and $\nabla_{0}^{v_{x}}V$
require to compute $\phi(y;x_{i},r_{i},v_{i}\pm h_{v})$ for 
$i=1,\cdots,m$ and $\phi(y;\tilde{x}_{j}\pm h_{x},r_{i},v_{i})$ for $i=1,\cdots,\tilde{m}$ if $x_{j}$
is allowed to move along the boundary. In Step 4, we use Euler's method
to update $v_{i}$ by $v_{i}+kD_{0}^{h_{v}}V(x_{1},\cdots,x_{m},v_{1},\cdots,v_{i},\cdots,v_{m})$
for $i=1,\cdots,m$ and $\tilde{x}_{j}$ by $\tilde{x}_{j}+kD_{0}^{h_{x}}V(x_{1},\cdots,\tilde{x}_{j},\cdots,x_{m},v_{1},\cdots,v_{m})$
for movable $\tilde{x}_{j}$, respectively. Figure \ref{fig:multi-mobile}
describes a result of Algorithm \ref{alg:Multiple-movable observer-algorithm}
applied to the same environment in Figure \ref{fig:multi-stationary-id},
but all of the sensors
are movable except for the two sensors not on the boundary of any obstacle. An optimal position which locally maximizes (\ref{eq:joint_visible_volume_function})
is presented in subfigure (b). It is evident that the resulting coverage
area is much larger than the one in Figure \ref{fig:multi-stationary-id}
because mobile sensors have more degrees of freedom than that of stationary
sensors. For simplicity, we suppose that all sensors have the same
sensing range $r$ and width of viewing angle $\theta$.

\begin{figure}
\begin{centering}
\subfloat[Initial visible volume is 0.9678.]{\begin{centering}
\includegraphics[width=4cm]{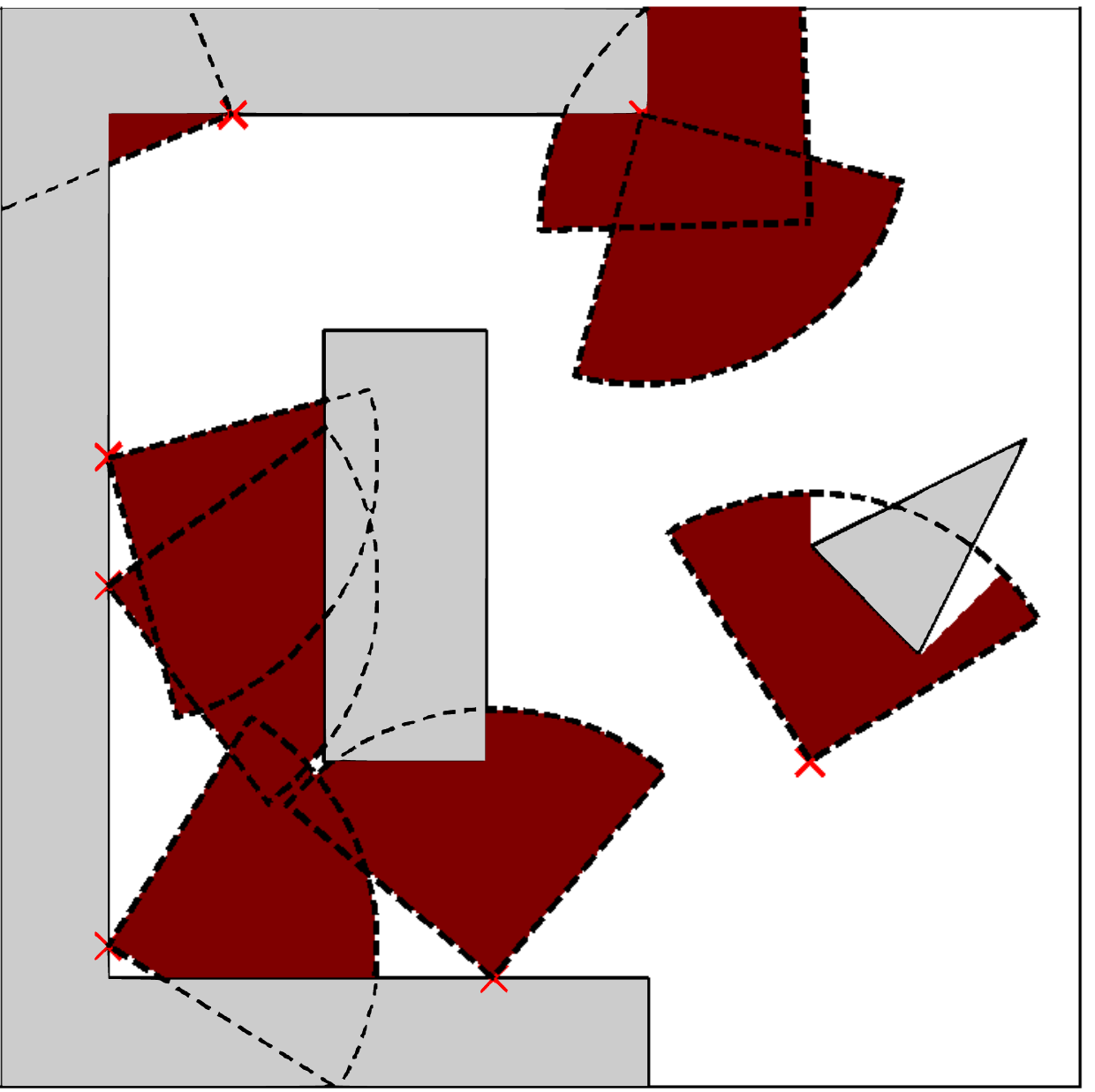}
\par\end{centering}

}\subfloat[Final visible volume is 1.5461.]{\begin{centering}
\includegraphics[width=4cm]{figs/8figure2_mobile0\lyxdot 5_50_opt}
\par\end{centering}

}\subfloat[]{\begin{centering}
\includegraphics[width=4cm]{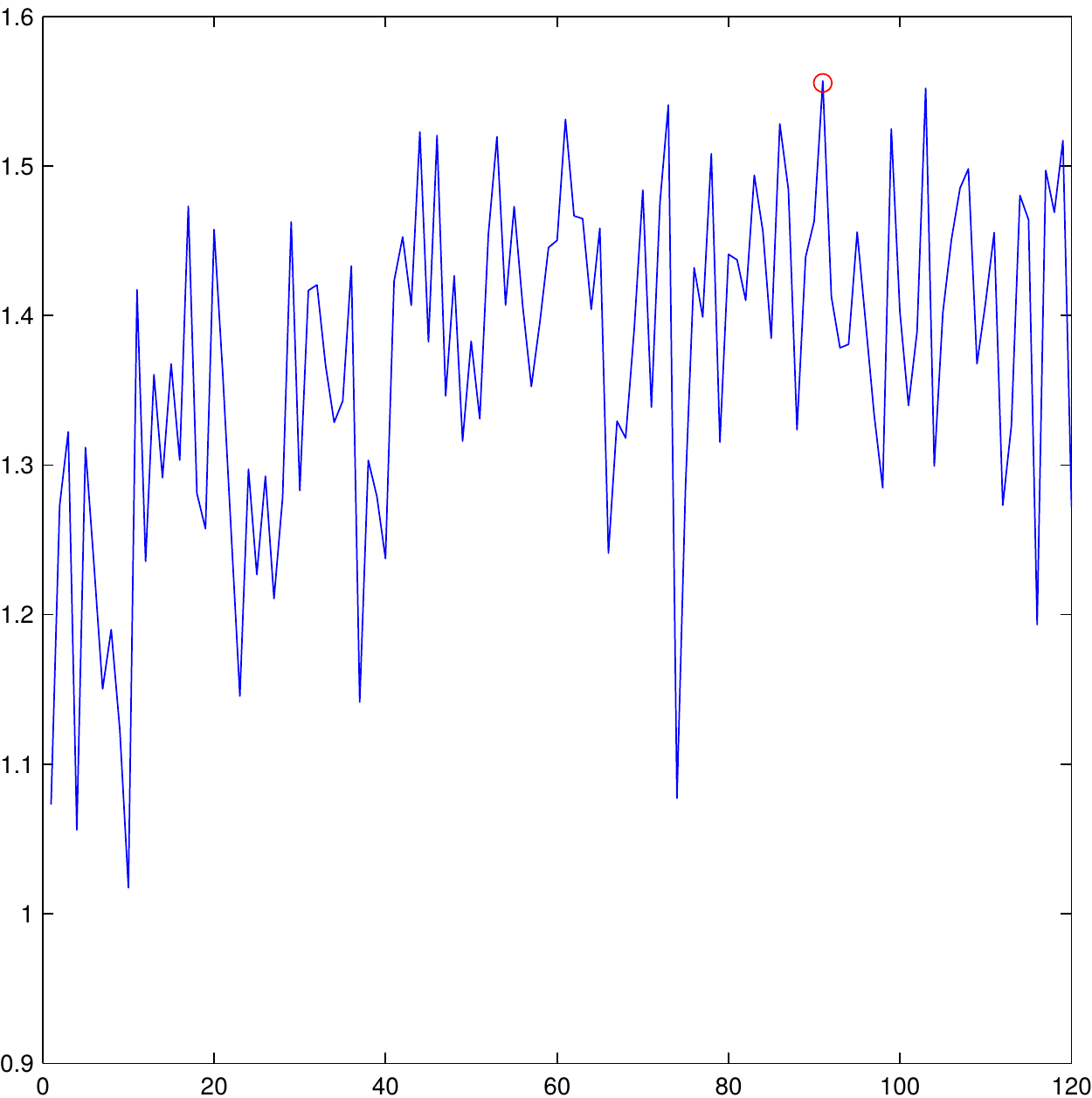}
\par\end{centering}

}
\par\end{centering}

\caption{\label{fig:multi-mobile}
Optimal locations and viewing directions of multiple sensors.
The locations and viewing directions of multiple sensors change from the
image (a) to the image (b) by Algorithm \ref{alg:Multiple-movable observer-algorithm}. (c) The
value of coverage area with respect to the iteration. The maximum
coverage is achieved at the 91st iteration. Parameters are chosen
as $r=0.5$, $h=0.005$, $\theta=\pi/2$, $h_{v}=4h$, and $h_{x}=h$.}
\end{figure}

\section{Optimal sensor positioning with a failure rate\label{sec:possible_failure}}

The failure of sensors is crucial for
the surveillance system to achieve its intended task, and 
this can be added to the stage of sensor positioning. 
We extend the previous approach to find optimal positions when the sensors have a non-zero probability of failure. 
As a priori estimate, we consider an expected coverage area of the sensors in the objective functional and analyze an optimal solution to maximize it. If there are sufficiently many sensors in the sense that the collective coverage area is much larger than the area of $D\backslash\Omega$, the optimal solution will be shown to uniformly spread the sensors over the monitoring area.
%We also propose to find the optimal positioning of multiple sensors which maximizes the expected coverage
%area of the environment.

We assume that sensors fail independently, i.e., if two sensors
have a failure probability $p_{1}$ and $p_{2}$, respectively, then
$p_{1}p_{2}$ is the joint failure probability. Given a point $y$
in the environment, we denote $0<p(y)<1$ by a probability that a
set of sensors fails to detect $y$. If $y$ is not in any
of the sensor's coverage, then $p(y)$ is set to be 1. Furthermore,
we denote $p_{k}(y)$ by a probability that the individual $k$-th
sensor fails to detect a point $y$. 

The expectation of the coverage area with possible sensor failures is defined as
\begin{equation}
\mbox{\ensuremath{\mathbb{E}}}(V(x_{1},\cdots,x_{m},v_{1},\cdots,v_{m}))=\int_{D}H(\phi(y;x_{1},\cdots,x_{m},r_{1},\cdots,r_{m},v_{1},\cdots,v_{m}))(1-p(y))dy.\label{eq:expected_coverage}
\end{equation}
That is, the functional is the integral of the products of covered area with the probabilities that the sensors to cover it are working. 
Note that if there is no sensor failure, then $p(y)=0$ so that the
objective functional (\ref{eq:expected_coverage}) is reduced to (\ref{eq:joint_visible_volume_function}).
The optimization problem is to maximize the expected area of coverage
which is formulated as: 
\begin{equation}
\underset{x_i, v_i}{\max} \;\mbox{\ensuremath{\mathbb{E}}}(V(x_{1},\cdots,x_{m},v_{1},\cdots,v_{m}))\label{eq:objective_functional_random}
\end{equation}
subject to the locations of the sensors adjustable on either the boundary of obstacles
$\partial\Omega$ or the boundary of the computational domain.

For simplicity, we discretize a two dimensional environment by uniformly placing a grid $(ih,jh)$
on it and denote $0<p_k (i,j)<1$ by a probability that the $k$-th sensor fails
to detect a grid point $(ih,jh)$. 
%$p_{k}$, $k=1,2,\cdots,n$, respectively. 
With an uniformly space
grid, the expected area of sensors' coverage is numerically approximated
by 
\[
\underset{i,j}{\sum}h^{2}\left(1-\overset{n}{\underset{k=1}{\prod}}p_{k}(i,j)\right).
\]

The following considers how the multiple sensors should be positioned in order to
maximize the objective functional \eqref{eq:expected_coverage}.  We
show that positioning multiple sensors with less overlapping area is closer to the
optimal solution than doing so with more overlapping area.

\begin{prop}
Positioning multiple sensors with minimal overlapping regions attains the greater 
expected coverage area than those with 
more overlapping regions.
\end{prop}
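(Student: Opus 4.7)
The plan is to compare two configurations of sensors with the same per-sensor footprint but different amounts of overlap, and to show that the less-overlapping one attains a strictly larger value of \eqref{eq:expected_coverage}. The natural decomposition is by multiplicity: for a given configuration, let $S(y) = \{k : y \in C_k\}$ denote the set of sensors whose coverage includes the point $y$, and set $U_n = \{y \in D : |S(y)| = n\}$. Then
\begin{equation*}
\mathbb{E}(V) \;=\; \sum_{n \geq 1} \int_{U_n} \Bigl(1 - \prod_{k \in S(y)} p_k(y)\Bigr)\,dy,
\end{equation*}
and the identity $\sum_n n\,|U_n| = \sum_k |C_k|$ shows that the total per-sensor coverage is a fixed budget shared among the strata $U_n$.

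The core local step is a one-cell swap. Suppose two sensors $i,j$ jointly cover some overlap region, and we move a piece of area $\delta$ inside this overlap out of sensor $j$'s footprint, simultaneously assigning sensor $j$ to a previously uncovered cell of the same area $\delta$. Pointwise, the integrand in \eqref{eq:expected_coverage} changes from $1 - p_i p_j$ to $1 - p_i$ on the freed overlap cell, and from $0$ to $1 - p_j$ on the newly covered cell. A direct computation collapses the total change to
\begin{equation*}
\Delta \mathbb{E}(V) \;=\; \delta\,(1 - p_i)(1 - p_j) \;>\; 0,
\end{equation*}
so the less-overlapping configuration strictly dominates locally.

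To pass from this local gain to the global comparison, I would iterate the swap: any more-overlapping configuration can be connected to a less-overlapping reference by a finite succession of admissible $\delta$-moves, each contributing a strictly positive change. Equivalently, when all sensors share a common failure probability $p$, the functional collapses to $\sum_n |U_n|(1 - p^n)$ with $\sum_n n\,|U_n|$ fixed, and since the map $n \mapsto (1 - p^n)/n$ is strictly decreasing for $p \in (0,1)$, the supremum over admissible multiplicity profiles is attained by concentrating all mass at $n = 1$, i.e.\ at zero overlap. The main obstacle I expect is geometric admissibility of the swaps: the sector shape of each sensor and occlusion by obstacles restrict which perturbations are realizable, so the proposition is cleanest when phrased at the level of the multiplicity profile $(|U_n|)_n$ --- any geometrically realizable rearrangement that reduces overlap while preserving per-sensor footprints strictly increases \eqref{eq:expected_coverage} by the local computation above.
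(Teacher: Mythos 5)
Your proof is correct, and it takes a genuinely different route from the paper's. The paper compares only two extreme configurations --- $m$ sensors with pairwise disjoint footprints versus $\tilde m$ sensors stacked on a single cell of area $B$ --- and reduces the claim to the scalar inequality $\sum_{k=1}^{m}(1-p_k)\ge 1-\prod_{k=1}^{\tilde m}p_k$, proved by induction on the number of sensors. Your argument instead works locally: the one-cell swap computation $\Delta\mathbb{E}(V)=\delta\,(1-p_i)(1-p_j)>0$ is exactly the two-sensor instance of the paper's inequality, since $(1-p_i)+(1-p_j)-(1-p_ip_j)=(1-p_i)(1-p_j)$, and iterating it recovers the general case while also covering all intermediate configurations rather than just the two extremes. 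The stratification by multiplicity $(|U_n|)_n$ under the fixed budget $\sum_n n\,|U_n|=\sum_k |C_k|$, combined with the strict monotonicity of $n\mapsto(1-p^n)/n$, gives a clean global statement in the equal-$p$ case and makes precise what ``minimal overlap'' actually optimizes. Your version buys two things the paper's does not: an explicit quantitative gain per de-overlapping move, and an honest acknowledgment that geometric realizability (sector shapes, occlusion, and the possibility that no uncovered cell remains once the total footprint exceeds $|D\setminus\Omega|$) restricts which rearrangements are admissible --- a caveat the paper's proof silently skips, since its formula $B\sum_{k}(1-p_k)$ for the non-overlapping case implicitly assumes each sensor owns its own disjoint area-$B$ cell. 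The one step you should still pin down is the connectivity claim that any configuration can be reached from a minimal-overlap one by finitely many admissible swaps; your fallback of phrasing the result at the level of multiplicity profiles is the right fix and is no less rigorous than the paper's own treatment.
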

\begin{proof}
%Suppose that there are $m$ sensors with an identical coverage area $A$ and a failure probability.
Suppose that the small region $\mathcal{R}$ with an area $B$ is
given, and that this region is fully covered by $m$ sensors. Firstly,
suppose that their coverages do not intersect. 
We denote $\mathcal{R}_{nonoverlap}$ by the sensor positions where all sensors do not intersect and cover 
entire $\mathcal{R}$. Then the contribution
of this sensor positioning to the total expected coverage area is 
\begin{equation}
\mathbb{E}[\mathcal{R}_{nonoverlap}]=B\left(\overset{m}{\underset{k=1}{\sum}}(1-p_{k})\right)
\end{equation}
where the $k$-th sensor has a failure probability $p_{k}$. 

Secondly, suppose that several $\tilde{m}$ $(\leq m)$ sensors overlap and fully
cover $\mathcal{R}$. In this case, the contribution to the total
expected coverage area is 
\begin{eqnarray}
\mathbb{E}[\mathcal{R}_{overlap}] & = & B\left(1-\overset{\tilde{m}}{\underset{k=1}{\prod}}p_{k}\right).
\end{eqnarray}
Now, we compare $\overset{m}{\underset{k=1}{\sum}}(1-p_{k})$ with
$1-\overset{\tilde{m}}{\underset{k=1}{\prod}}p_{k}$. Since the failure
probability is between 0 and~1, 
\[
1-\overset{m}{\underset{k=1}{\prod}}p_{k}>1-\overset{\tilde{m}}{\underset{k=1}{\prod}}p_{k}.
\]
 Using the mathematical induction, suppose that 
\[
\overset{m}{\underset{k=1}{\sum}}(1-p_{k})\geq1-\overset{m}{\underset{k=1}{\prod}}p_{k}
\]
holds true. Then, multiplying $p_{n+1}$ yields that 
\[
p_{n+1}\overset{n}{\underset{k=1}{\sum}}(1-p_{k})\geq p_{n+1}-\overset{n+1}{\underset{k=1}{\prod}}p_{k}.
\]
Now,
\begin{eqnarray*}
\overset{n+1}{\underset{k=1}{\sum}}(1-p_{k}) & \geq & \overset{n+1}{\underset{k=1}{\sum}}(1-p_{k})-(1-p_{n+1})\overset{n}{\underset{k=1}{\sum}}(1-p_{k})\\
 & = & 1-p_{n+1}+p_{n+1}\overset{n}{\underset{k=1}{\sum}}(1-p_{k})\\
 & \geq & 1-\overset{n+1}{\underset{k=1}{\prod}}p_{k}.
\end{eqnarray*}
Therefore, $\mathbb{E}[\mathcal{R}_{nonoverlap}]\geq\mathbb{E}[\mathcal{R}_{overlap}]$.
\end{proof}
\noindent 
In other words, when the sensor fails with non-zero probability, spreading multiple
sensors over the monitoring area as much as possible attains the largest
expectation. 
Consequently, if the algorithm maximizes \eqref{eq:expected_coverage},
the optimal position will not only cover all of the free areas,
$D\backslash\Omega$, but also minimize the overlaps among the sensors.
We compare two scenarios in the following section; optimal positioning of the sensors with or without a failure rate
and test the performance of our proposed algorithm.

\section{Numerical applications\label{sec:more_examples}}

In this section, various numerical experiments are given. We use Algorithm~\ref{alg:Multiple-movable observer-algorithm} with the functional \eqref{eq:expected_coverage} for most of the examples, considering a nonzero failure rate.  In one example depicted in Figure \ref{fig:no_fail} where the failure rate $p=0$, we use the  objective functional (\ref{eq:joint_visible_volume_function}).

%First, two scenarios where all of the sensors work with or without a failure probability are considered. Afterwards, we always assume that each sensor has the same probability of a possible failure. We then consider a problem of optimal deployment in the environment with a corner. Supporting a robust entrance monitoring for a certain area is also considered. Finally, we extend the above approach to a 3D environment and show an experiment. 

\subsection{Comparison: failure rate $p=0$ and $p\neq 0$}
Two examples compare the sensor positioning with or without a failure rate. 
We first consider a problem with sensors which work perfectly, i.e., the
probability of failure is $p=0$. With a set of these sensors, a rectangular
environment without obstacles will be monitored. Suppose that the
number of sensors is large so that the environment can be fully covered
with some redundancies. Figure \ref{fig:no_fail}(a) shows an initial
position of 16 sensors which is randomly determined by putting 8
sensors for each side. We display the number of overlaps in the sensors
coverage by color scales using a horizontal color bar. The area of the
region to be covered is $1$. With the finite sensing range $r=0.6$
and the limited width of sensing angle $\theta=\pi/3$, the collective sensors\rq{}s
coverage area, if they are disjoint, is 3.016.

As shown in subfigures (b) and (c), with iterations our algorithm
focuses on covering all of the regions without any empty space to
achieve the optimal solution. The number of coverage overlaps is not
considered.

\begin{figure}
\begin{centering}
\hspace*{-1.0cm}\includegraphics[width=4cm]{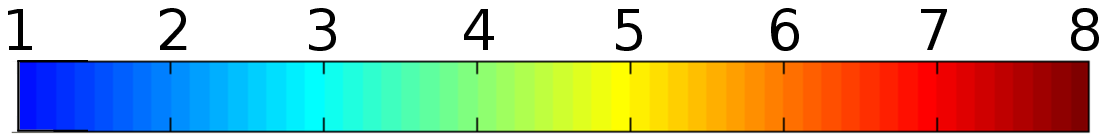}
\par\end{centering}

\begin{centering}
\hspace*{-0.5cm}\subfloat[Initial visible area is 0.4131.]{\begin{centering}
\includegraphics[width=4cm]{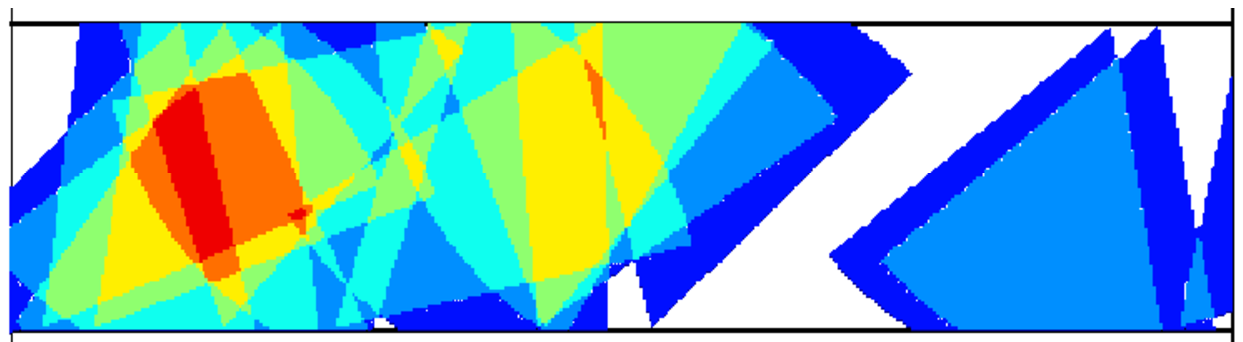}
\par\end{centering}

}\subfloat[After 10 iterations, resulting visible area is 0.9928.]{\begin{centering}
\includegraphics[width=4cm]{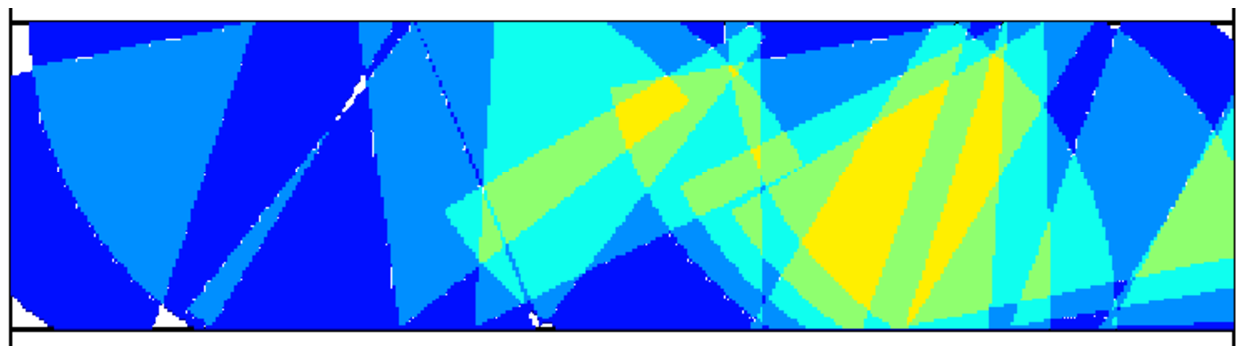}
\par\end{centering}

}\subfloat[After 50 iterations, resulting visible area is 0.9996.]{\begin{centering}
\includegraphics[width=4cm]{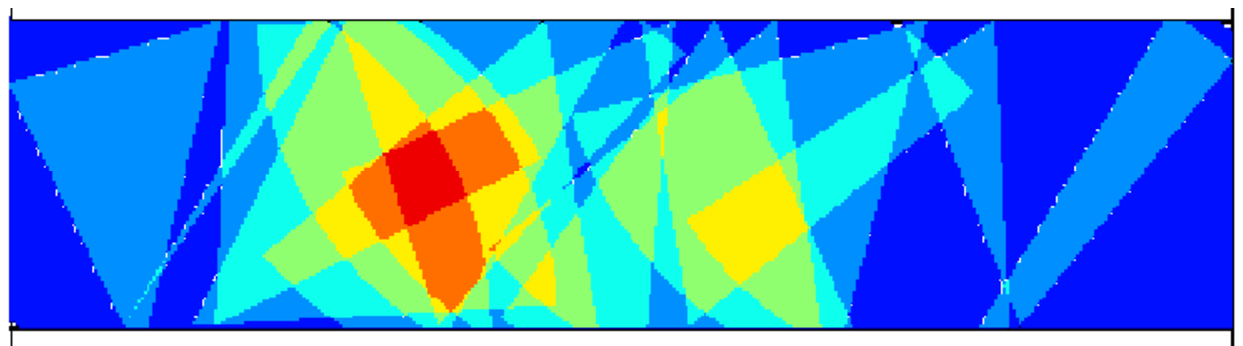}
\par\end{centering}

}
\par\end{centering}

\caption{\label{fig:no_fail} Position optimization with no possible failure.
The resulting sensor positions are obtained using the parameters
$r=0.6$, $h=0.005$, $\theta=\pi/3$. $N=50$ iterations of our algorithm are used.
The horizontal colorbar displays how many sensors cover the area using the colormap.}
\end{figure}

In real applications, sensors may fail. 
%It is worth pointing out that practical sensors can fail in applications.
Assuming that the sensor fails to work with a probability $p$, our algorithm
maximizing (\ref{eq:expected_coverage}) not only focuses on covering
all of the free areas but also minimizes the overlaps among the sensors
as proven in Proposition~1. We take the same environment of Figure~\ref{fig:no_fail},  but assume that each sensor has a 50\% chance of failure,
i.e., with a probability $p=0.5.$ Figure \ref{fig:can_fail} shows
the results of our algorithm. Since we compute the expected area of
covered region with a probability, the initial value of the objective
functional is different from the one in Figure \ref{fig:no_fail}.
The algorithm automatically finds the optimal deployment of sensors
which leads to the full coverage as well as the minimal area of the
intersections. The subfigure (c) shows evenly distributed coverages
of sensors compared to Figure \ref{fig:no_fail}(c).

\begin{figure}[H]
\begin{centering}
\hspace*{-1.0cm}\includegraphics[width=4cm]{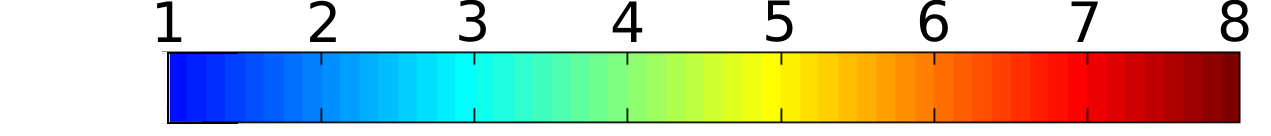}
\par\end{centering}

\begin{centering}
\hspace*{-0.5cm}\subfloat[Initial expected area is 0.3130.]{\begin{centering}
\includegraphics[width=4cm]{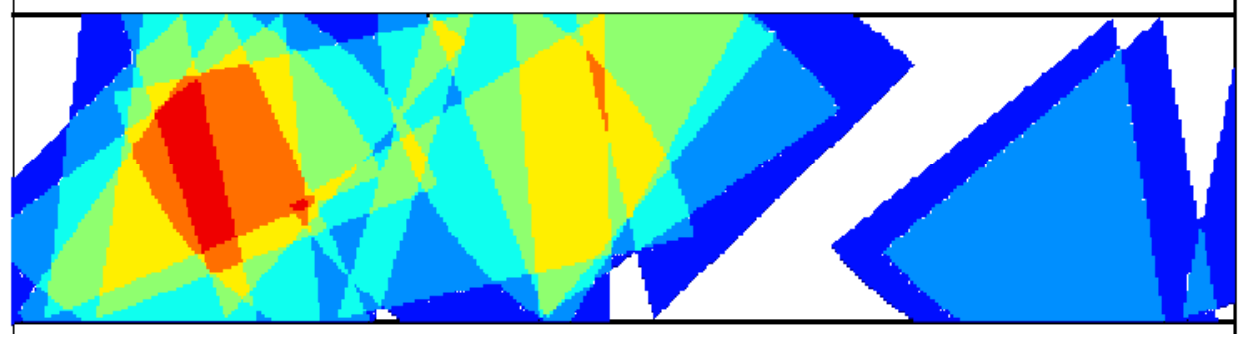}
\par\end{centering}

}\subfloat[After 10 iterations, resulting expected visible area is 0.7810.]{\begin{centering}
\includegraphics[width=4cm]{figs/16ID_fail_tube_many_p0\lyxdot 5_60_1}
\par\end{centering}

}\subfloat[After 50 iterations, resulting expected visible area is 0.8079.]{\begin{centering}
\includegraphics[width=4cm]{figs/16ID_fail_tube_many_p0\lyxdot 5_60_38}
\par\end{centering}

}
\par\end{centering}

\caption{\label{fig:can_fail} Position optimization with a possible failure. The
resulting sensor placements are obtained using the parameters $r=0.6$,
$h=0.005$, $\theta=\pi/3$. $N=50$ iterations of our algorithm
are used. Each sensor has a 50\% chance of not working, $p=0.5.$ 
Compared to Figure \ref{fig:no_fail}, the result shows more uniformly distributed coverages.
}
\end{figure}

\subsection{General examples for $p \neq 0$}\label{subsec:general_examples}
 
We consider  more general shapes of environment. The first is 
a narrow alley with a corner.  Given a sufficient number of sensors with the same failure rate, we will optimally position 
them on the walls to maximize an expectation of the collective coverage. The second is an arbitrary polygon,
and we consider the problem of optimal positioning on the boundary.

Figure \ref{fig:corner} illustrates how multiple sensors are optimally positioned to monitor the alley with a corner. 
4 sensors are randomly located on each wall facing the alley, i.e., a total of 16 sensors, as in subfigure (a). 
Red crosses indicate
the sensor locations, and sectors with dotted lines
describe the sensors\rq{} coverage.
The optimal positions found by our algorithm with 50 iterations are presented in subfigure (b). 
The coverage of sensors is uniformly distributed across the target area.
This environment will be generalized
to 3D in Section \ref{subsec:3d}.

\begin{figure}[H]
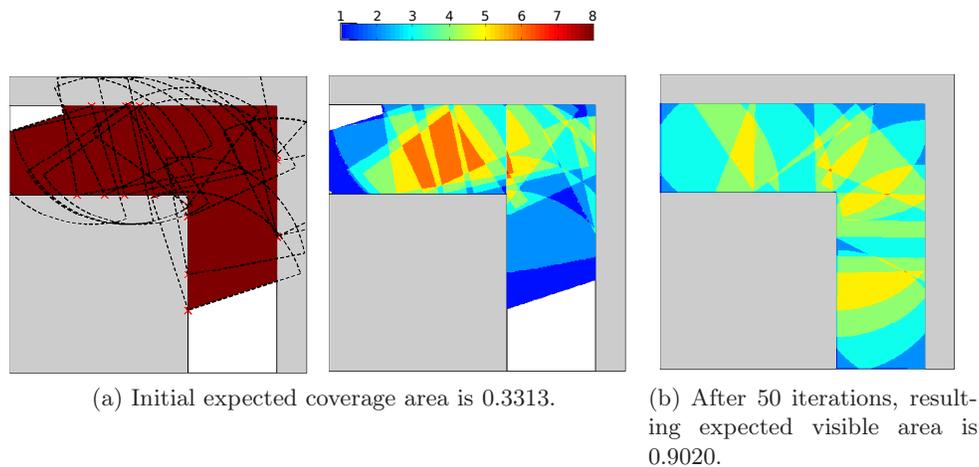

\begin{centering}
\hspace*{-1.0cm}\includegraphics[width=4cm]{figs/colorbar}
\par\end{centering}

\begin{centering}
\subfloat[Initial expected coverage area is 0.3313.]{\begin{centering}
\includegraphics[width=4cm]{figs/16ID_fail_corner_p0\lyxdot 5_80_0}\;
\includegraphics[width=4cm]{figs/16ID_fail_corner_many_p0\lyxdot 5_80_0}
\par\end{centering}

}\subfloat[After 50 iterations, resulting expected visible area is 0.9020.]{\begin{centering}
\includegraphics[width=4cm]{figs/16ID_fail_corner_many_p0\lyxdot 5_80_118}
\par\end{centering}

}
\par\end{centering}

\caption{\label{fig:corner} Optimal positioning in an alley with a corner. The resulting
sensor positions are obtained using the parameters $r=0.8$, $h=0.005$,
$\theta=\pi/2$, $p=0.5$. 16 sensors are considered. The result shows uniformly distributed coverages.}
\end{figure}

In Figure \ref{fig:complicated}, optimal sensor positioning in the environment with polygonal obstacles is considered. 
This scenario can be interpreted as positioning security surveillance cameras on the outside (e.g., rooftop) of urban buildings.
The polygons are shaded gray, and the failure rate of the sensors is taken as 0.5.
We apply our algorithm to optimally position 15 sensors on the boundary. The result in subfigure (b) is obtained with 200 iterations.
As shown in the figure, our algorithm can achieve a maximum area of coverage while handling complicated shapes of obstacles.

\begin{figure}[h]
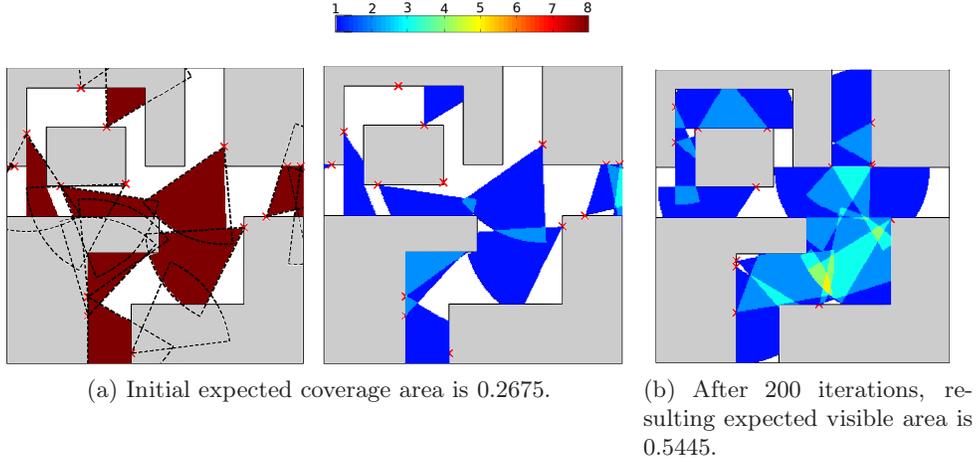

\begin{centering}
\hspace*{-1.0cm}\includegraphics[width=4cm]{figs/colorbar}
\par\end{centering}

\begin{centering}
\subfloat[Initial expected coverage area is 0.2675.]{\begin{centering}
\includegraphics[width=4cm]{figs/15ID_city_p0\lyxdot5_50_0}\;
\includegraphics[width=4cm]{figs/15ID_city_many_p0\lyxdot5_50_0}
\par\end{centering}

}\subfloat[After 200 iterations, resulting expected visible area is 0.5445.]{\begin{centering}
\includegraphics[width=4cm]{figs/15ID_city_many_p0\lyxdot5_50_259}
\par\end{centering}

}
\par\end{centering}

\caption{\label{fig:complicated} Optimal positioning in an environment with polygonal obstacles. The resulting
sensor placements are obtained using the parameters $r=0.5$, $h=0.005$,
$\theta=\pi/3$, $p=0.5$. 15 sensors are considered.
This algorithm handles a complicated environment when sensors can move along the boundary.}
\end{figure}

\subsection{Access monitoring}
Access monitoring system plays an important role in securing a building 
and its surroundings. Well-designed monitoring systems 
are able to detect all of the suspicious activities while considering a possible failure of the sensors.
In this section, we consider a problem of optimally positioning the sensors to monitor the population entering a building.

We position the sensors on the boundary of the important building. The building is considered as the obstacle, and the sensors move along the boundary of the building to find the optimal positions. 
To use our approach with a possible failure rate, we construct the functional as
\begin{equation}
\mbox{\ensuremath{\mathbb{E}}}(V(x_{1},\cdots,x_{m},v_{1},\cdots,v_{m}))=\int_{D} \chi_A (y)H(\phi(y;x_{1},\cdots,x_{m},r_{1},\cdots,r_{m},v_{1},\cdots,v_{m}))(1-p(y))dy
\nonumber
%\label{eq:important_area}
\end{equation}
where $\chi_A$ is the indicator function of a set $A$, and $0<p(y)<1$ is a probability that a set of sensors fails to detect $y$.
Here, the set $A$ is a closed region which encloses the building so that the floating population must traverse this region in order to either enter or exit the building. Various shapes of $A$ can be considered, and  we use a narrow band around the building for simplicity. 
The main difference from the previous examples is that the area of full coverage only needs to be considered on a narrow band around the building as long as the access is fully covered.

In the following experiments, we consider a pentagon shaped building located at the center of the computational domain. 
Two red pentagons enclosing the building are the monitored region $A$. Note that one cannot move out or in to the building without passing the strip $A$.
We employ the algorithm which maximizes the above functional to find 
the optimal position as well as the viewing direction of sensors while considering the sensor's possible failure.

In the case of symmetric objects, one can consider solving the problem more efficiently
by adding a symmetry constraint to the feasible position.
Figures~\ref{fig:corner-tube-sym}(a-b) show such an example where we position three sensors to each
side symmetrically. One sensor is fixed at the center with a viewing direction normal to the boundary, 
and the other two move with symmetry: (1) when one sensor moves closer to the corner, the other moves to the
other corner with the same distance; (2)  when the viewing direction of one sensor rotates clockwise, the one of the other rotates 
counterclockwise at the same angle. 
This problem requires a 2-dimensional optimization (location and viewing direction) to position two sensors away from the center.  This case,
from the initial condition in Figure~\ref{fig:corner-tube-sym}(a), the optimal solution is computed as Figure~\ref{fig:corner-tube-sym}(b) with the expected coverage of 0.3366.  Under this symmetry constraint, we find that the optimal location of three sensors divides the edge in the ratio 0.29:0.5:0.71. Interestingly, the optimal result does not equally divide the edge into four as one may expect.

\begin{figure}
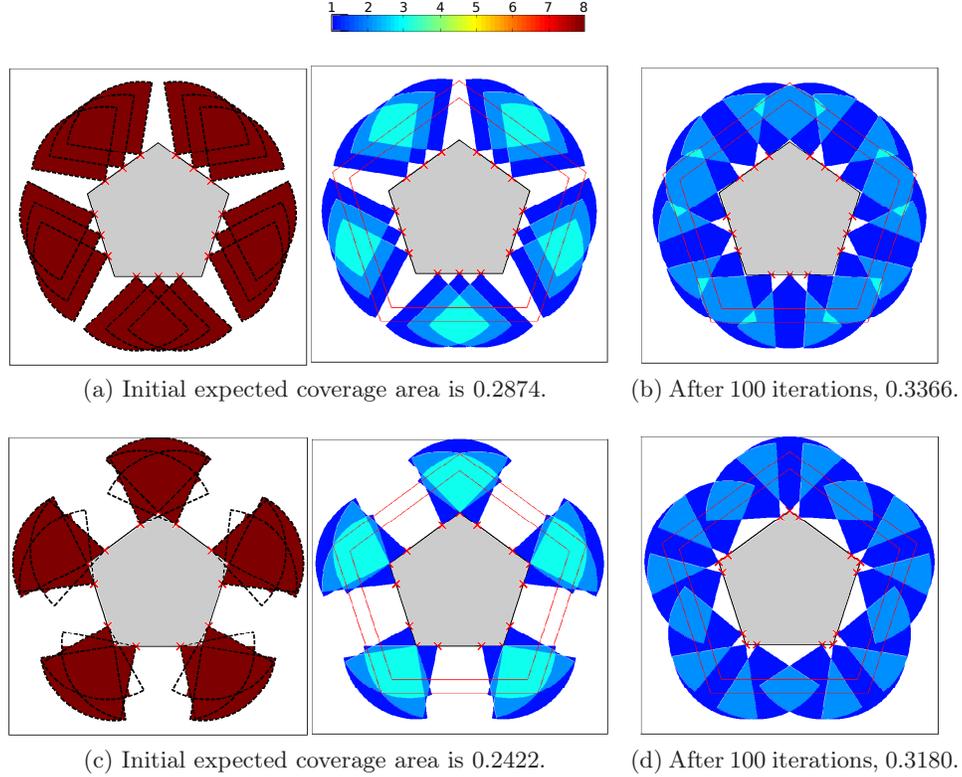

\begin{centering}
\hspace*{-1.0cm}\includegraphics[width=4cm]{figs/colorbar}
\par\end{centering}

\begin{centering}
\subfloat[Initial expected coverage area is 0.2874.]{\begin{centering}
\includegraphics[width=4cm]{figs/15ID_pentagon_sym_p0\lyxdot5_25_0}\includegraphics[width=4cm]{figs/15ID_pentagon_many_sym_p0\lyxdot5_25_0}
\par\end{centering}

}\subfloat[After 100 iterations, 0.3366.]{\begin{centering}
\includegraphics[width=4cm]{figs/15ID_pentagon_many_sym_p0\lyxdot5_25_result}
\par\end{centering}

}
\par\end{centering}

\begin{centering}
\subfloat[Initial expected coverage area is 0.2422.]{\begin{centering}
\includegraphics[width=4cm]{figs/15ID_pentagon_sym2_p0\lyxdot5_25_0}\includegraphics[width=4cm]{figs/15ID_pentagon_many_sym2_p0\lyxdot5_25_0}
\par\end{centering}

}\subfloat[After 100 iterations, 0.3180.]{\begin{centering}
\includegraphics[width=4cm]{figs/15ID_pentagon_many_sym2_p0\lyxdot5_25_result}
\par\end{centering}

}
\par\end{centering}

\caption{\label{fig:corner-tube-sym} Entrance security monitoring for the pentagon under two symmetry constraints. The resulting
sensor positions are obtained with 15 sensors using the same parameters 
$r=0.5$, $h=0.0025$, $\theta=\pi/2$, $p=0.5$. 
%as Figure \ref{fig:corner-tube}.
There are 2 degrees of freedom in optimization. 
}
\end{figure}

As a remark, one can also consider different types of symmetry. Figures~\ref{fig:corner-tube-sym}(c-d) show another of such example. 
Each sensor is fixed at the corner with a viewing direction exactly opposite to the center. Other two sensors move with symmetric on the edge as before. 
The result of our algorithm is given in Figure~\ref{fig:corner-tube-sym}(d). The resulting expected value is not as high as the one in Figure~\ref{fig:corner-tube-sym}(b).
% or Figure A (c), each being 3.xxx and 3.xxx. 

We can solve the problem without any symmetry constraint. This is more suitable for the general shapes. 
In this case, however, the degree of freedom is increased from 2 to 30 (position and viewing direction of 15 sensors). 
With intermittent diffusion and gradient descent for all variables, 
from the initial condition in Figure~\ref{fig:corner-tube}(a), the optimal solution is computed as Figure~\ref{fig:corner-tube}(b) with the expected coverage of 0.3244. The result  distributes the sensors and adjusts the viewing directions to uniformly cover the entire area of $A$.
This result provides the better expected coverage area than the one of Figure~\ref{fig:corner-tube-sym}(d) but the worse than the one of Figure~\ref{fig:corner-tube-sym}(b). 
Although the expected coverage area is not as sharp as the one with symmetry constraint, this result shows that our algorithm can 
handle various cases and approximate the globally optimal solution. 

\begin{figure}
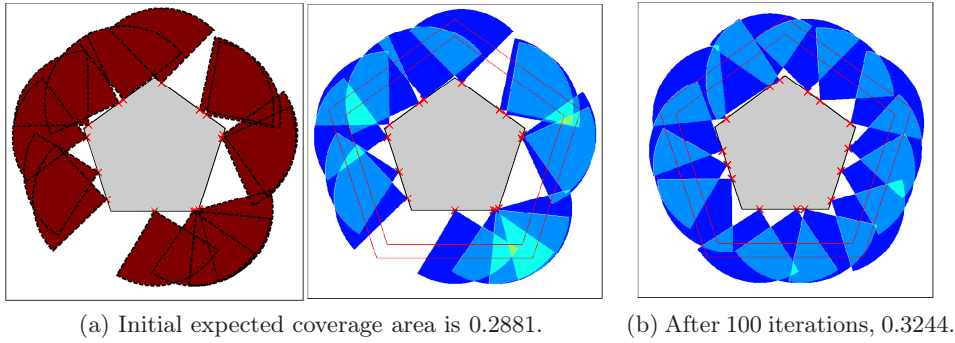

%\begin{centering}
%\hspace*{-1.0cm}\includegraphics[width=4cm]{figs/colorbar}
%\par\end{centering}

\begin{centering}
\subfloat[Initial expected coverage area is 0.2881.]{\begin{centering}
\includegraphics[width=4cm]{figs/15ID_pentagon_p0\lyxdot 5_25_0}\includegraphics[width=4cm]{figs/15ID_pentagon_many_p0\lyxdot 5_25_0}
\par\end{centering}

}\subfloat[After 100 iterations, 0.3244.]{\begin{centering}
\includegraphics[width=4cm]{figs/15ID_pentagon_many_p0\lyxdot 5_25_12}
\par\end{centering}

}
\par\end{centering}

\caption{\label{fig:corner-tube} Entrance security monitoring for the pentagon. The resulting
sensor positions are obtained with 15 sensors using the same parameters as Figure~\ref{fig:corner-tube-sym}.
%$r=0.5$, $h=0.0025$, $\theta=\pi/2$, $p=0.5$. 
There are 30 degrees of freedom in optimization. 
}
\end{figure}

\subsection{Important regions}
We consider a particular problem which some regions in the monitored area are more important than others. 
In other words, the various weights of importance are assigned and given a priori. To apply the previous algorithm, we modify the objective functional as
\begin{equation}
\mbox{\ensuremath{\mathbb{E}}}(V(x_{1},\cdots,x_{m},v_{1},\cdots,v_{m}))=\int_{D}w(y)H(\phi(y;x_{1},\cdots,x_{m},r_{1},\cdots,r_{m},v_{1},\cdots,v_{m}))(1-p(y))dy
\nonumber
%\label{eq:important_area}
\end{equation}
where the weight $w(\cdot)$ is distributed over the environment. 

This can take care of the scenario where there are blind spots in the environment we want to monitor.
This setup resembles a realistic case, e.g., sensor positioning for grocery market and book store. To set-up this environment, we first placed a stationary long-range sensor (which can represent the location of the casher), and computed the blind spots using the visibility computation using Algorithm \ref{PDE-based-algorithm}.   In Figure \ref{fig:shop0.5} (a), the circle represents the cashier's location, and the red lines behind the gray rectangles (e.g. book shelves) represents the edge of visibility boundary.  In addition, near the casher is an important area to monitor, this is represented by the red square box enclosing the red circle.

%We suppose that a stationary long-range sensor is located in advance and that there are obstacles which block this sensor\rq{}s coverage. In Figure \ref{fig:shop0.5}, a long-range sensor is located at the circle, and the occluded region from this sensor is obtained by Algorithm \ref{PDE-based-algorithm}. We then take two important regions: one is the square area containing a long-range sensor, and the other is the occluded area due to the rectangular obstacles. 

The objective is to optimally position the sensors to monitor the important regions, i.e., the two separate areas, each enclosed by the red lines.  This mission can be easily tackled using our approach. 
We assign a weight $1$ only on the important regions and $0$ on the others, then apply Algorithm \ref{alg:Multiple-movable observer-algorithm} to maximize the above functional.
Considering the strategy of sensor's possible failure, the initial positions in the subfigure (b) are changed into the subfigure (c) with 50 iterations. The failure rate is chosen $p=0.5$.  As shown in the result, full coverage of the area is not achieved although the maximum of the expected coverage  is attained.  
%For the sensors which can cover non-covered regions, 
Instead of covering the small non-covered regions, sensors make a large overlap in the covered regions for maximum expectation because the contribution of the overlapping areas outweighs that of the non-covered areas.  This is also due to the difference between the shape and the topology of the important region and the coverage shape of the sensors. 

When the failure rate is lower, one can achieve full coverage. 
Figure \ref{fig:shop0.1} shows an example with a smaller failure rate $p=0.1$.  We apply Algorithm \ref{alg:Multiple-movable observer-algorithm} to maximize the coverage area, and compared to the case when $p=0.5$, the important regions are both fully covered by the sensors, and the result gives a higher expected coverage.

\begin{figure}
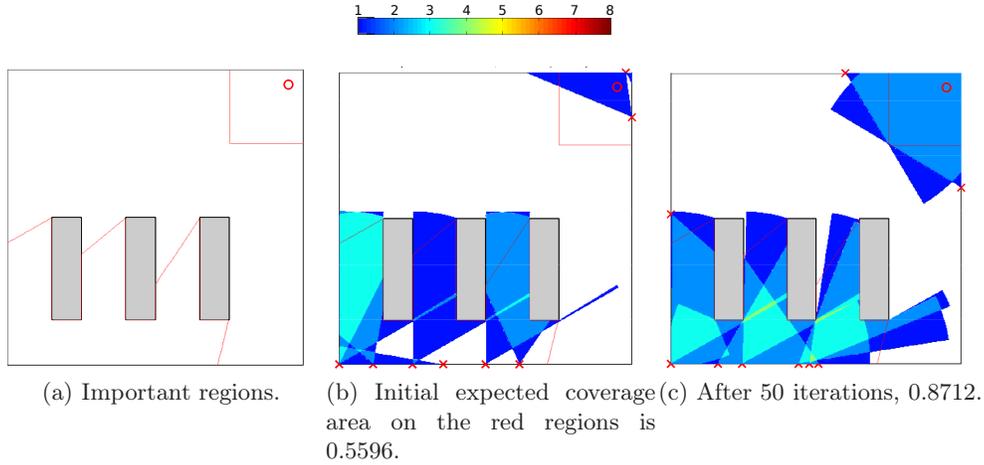

\begin{centering}
\hspace*{-0.6cm}\includegraphics[width=4cm]{figs/colorbar}
\par\end{centering}

\begin{centering}
\subfloat[Important regions.]{\begin{centering}
\includegraphics[width=4cm]{figs/0ID_store_p0\lyxdot5_105_0}
\par\end{centering}
}
\subfloat[Initial expected coverage area on the red regions is 0.5596.]{\begin{centering}
\includegraphics[width=4cm]{figs/9ID_store_many_p0\lyxdot5_105_0}
\par\end{centering}
}
\subfloat[After 50 iterations, 0.8712.]{\begin{centering}
\includegraphics[width=4cm]{figs/9ID_store_many_p0\lyxdot5_105_11}
\par\end{centering}

}
\par\end{centering}

\caption{\label{fig:shop0.5} Important region monitoring with $p=0.5$. The resulting sensor
placements are obtained with 9 sensors using the parameters $r=1.05$,
$h=0.005$, $\theta=\pi/3$. Most of the important regions are covered, but there exist non-covered regions.}
\end{figure}

\begin{figure}
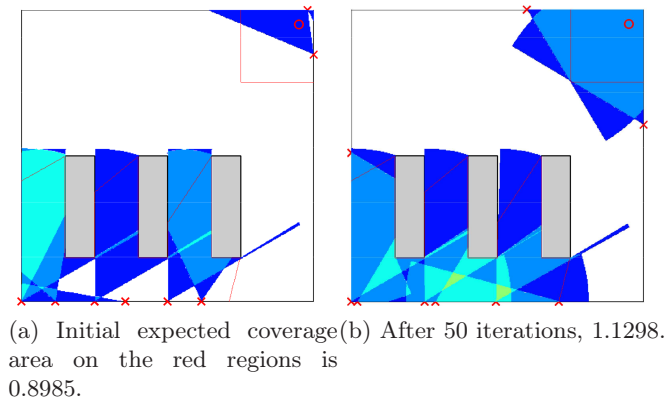

%\begin{centering}
%\hspace*{-0.7cm}\includegraphics[width=4cm]{figs/colorbar}
%\par\end{centering}

\begin{centering}
\subfloat[Initial expected coverage area on the red regions is 0.8985.]{\begin{centering}
\includegraphics[width=4cm]{figs/9ID_store_many_p0\lyxdot5_105_0}
\par\end{centering}
}
\subfloat[After 50 iterations, 1.1298.]{\begin{centering}
\includegraphics[width=4cm]{figs/9ID_store_many_p0\lyxdot1_105_9}
\par\end{centering}

}
\par\end{centering}

\caption{\label{fig:shop0.1} Important region monitoring with $p=0.1$. The resulting sensor
placements are obtained with 9 sensors using the same parameters as Figure~\ref{fig:shop0.5}.
The important regions are fully covered.}
\end{figure}

\subsection{Higher dimensional example\label{subsec:3d}}
 
The examples presented so far were restricted to 2-dimensional environments.  The proposed method can be easily generalized to the higher dimensions. By modifying the level set formulation in Section~\ref{sec:Coverage-optimization-GD}, the previous approach can be applicable to the higher dimensional setup. 
For simplicity, we focus on 3-dimensional which addresses a real world environment.
The higher ($>3$) dimensions can be considered in a similar manner.

%With an effort to 3D, we consider the design of the surveillance system
%in the practical environment. 
A sensor in 3-dimensional space is given
in Figure \ref{fig:3D-sensor}. The coverage of a sensor is modeled
by a spherical sector whose apex point is located at the sensor location.
That is, as described in the right subfigure, we generalize the 2-dimensional
sensor in Figure \ref{fig:A-sensor} to the 3-dimension by rotating it about
the central axis.

\begin{figure}[H]
\begin{centering}
\includegraphics[width=10cm]{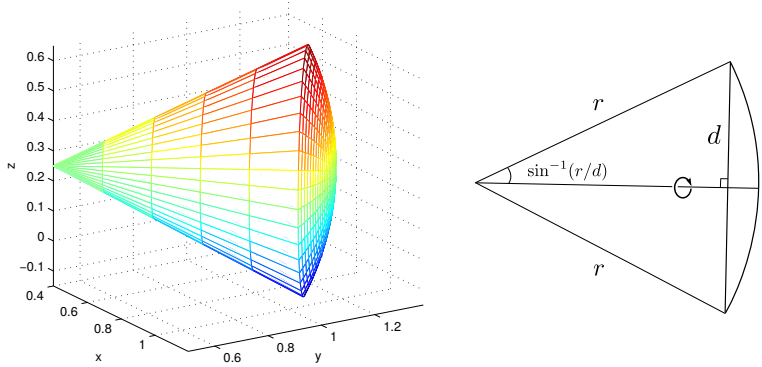}
\par\end{centering}

\caption{\label{fig:3D-sensor}The coverage area of a 3-dimensional sensor
is described by a spherical sector. The location of the sensor is
at $(0.4,0.5,0.25)$ which is the apex point of a spherical sector.
This sensor has finite sensing distance $r=0.4$ and a base radius
$d=0.8$.}
\end{figure}

Extension to 3D can be done by following \cite{TCOBS:JCP2004}. %which considers a real city model.
First, the level set
formulation introduced in Section \ref{sec:Coverage-optimization-GD}
is converted to the 3D setup. The coverage function $\phi(y;x,r,v)$
is computed by the 3D version of Algorithm \ref{PDE-based-algorithm}.
%This extension is already mentioned in \cite{TCOBS:JCP2004}. 
Then,
the coverage area of multiple sensors is computed by the integration
over the 3D computational domain. Secondly, we maximize the coverage
area with respect to four degrees of freedom for each sensor: two for adjusting
the viewing direction, and the other two for the coordinates on the boundary of the
3D obstacles. The optimization using the intermittent diffusion automatically approximates
one of the globally optimal positions of the sensor in terms of four degrees
of freedom. 

A 3D version of the example in Figure \ref{fig:corner} will be constructed on a 3D computational domain whose size is $[0,2]\times[0,2]\times[0,0.5]$.
We place two obstacles: a square box with the dimension $[0,1.4]\times[0,1.4]\times[0,0.5]$ and an upside down letter "L" shaped box with the same height. 
In Figure \ref{fig:3D_corner}(a), the left subfigure represents an initial position using a projection onto $x-y$ plane. The numbers besides the red crosses refer to the $z$-coordinate of the position. The middle subfigure shows the non-monitored regions 
with respect to the initial position of the sensors when we look at the environment in the front. The obstacles are shaded transparent gray, the blue surface represents the boundary of uncovered regions by the sensors, and the red surface is the cut due to the top $z=0.5$. 
While our algorithm changes the initial position as the subfigure (b) with 20 iterations, the 
expected coverage area increases from 0.1011 to 0.6259. It is also evident from the results that most of the areas are now covered under the new position of the sensors.

\begin{figure}[h]
\begin{centering}
\subfloat[The initial expected coverage area is 0.1011. 
(Left) Projection of the sensor locations onto $x-y$
plane. Red crosses denote the position of the sensor and the numbers shows the height of each sensor. 
%refer to the $z$-coordinate of the position
(Middle) A front side view in 3D. The obstacles are shaded transparent gray, the blue surface represents the boundary of uncovered regions by the sensors, and the red surface is the cut due to the top $z=0.5$. 
(Right) A reverse side view. ]{\begin{centering}

\includegraphics[width=3.7cm]{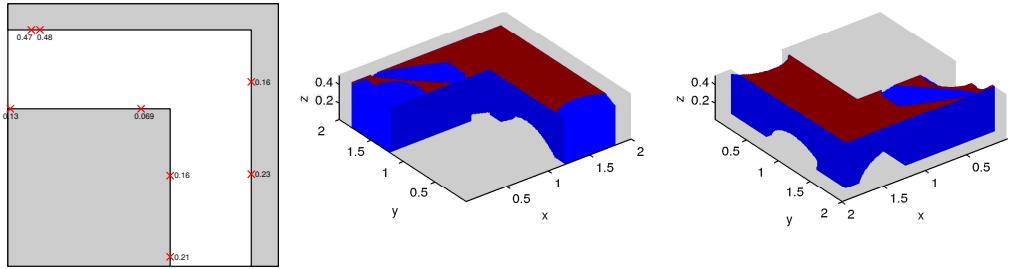}
\includegraphics[width=5cm]{figs/8ID_fail_corner_3D0\lyxdot 5_3_0a}\includegraphics[width=5cm]{figs/8ID_fail_corner_3D0\lyxdot 5_3_0b}
\par\end{centering}

}
\par\end{centering}

\begin{centering}
\subfloat[After 20 iterations, the resulting expected coverage area is 0.6259.
(Left) Projection of the sensor locations onto $x-y$
plane. Red crosses denote the position of the sensor and the numbers shows the height of each sensor. 
(Middle) A front side view in 3D.
(Right) A reverse side view. ]{\begin{centering}

\includegraphics[width=3.7cm]{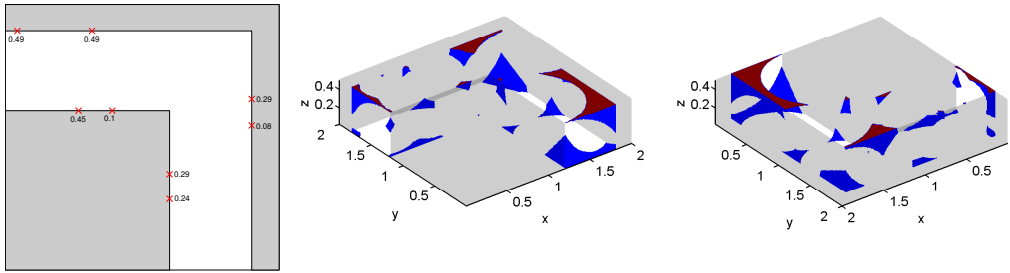}
\includegraphics[width=5cm]{figs/8ID_fail_corner_3D0\lyxdot 5_3_19a}\includegraphics[width=5cm]{figs/8ID_fail_corner_3D0\lyxdot 5_3_19b}
\par\end{centering}

}
\par\end{centering}

\caption{\label{fig:3D_corner} A 3D alley with a corner. The resulting sensor positions are obtained
with 8 sensors using the parameters: sensing range $r=0.9$, base
radius $d=0.7$, step size $h=0.01$, and failure rate $p=0.5$. 
The result shows the movement of the sensors and how most of the areas are covered.
Our algorithm increases
the expected coverage area from 0.1011 to 0.6259.}
\end{figure}

\section{Summary\label{sec:Summary}}
We proposed a novel method for finding the optimal position of sensors
with a limited coverage range and width of viewing angle in a known environment. The
efficient positioning of such equipment is increasingly important
as it directly impacts the efficiency of allocated resources and
system performance. We considered two global optimization problems.
One is to achieve the greatest surveillance area of the region with
the given number of multiple sensors which have characteristics such
as range and angle limits. The other is to optimize the sensor placement
given that each sensor may randomly fails to operate.
In this case, the sensor placement maximizing the expected coverage
area in the environment will be the desired optimal solution. From
the results of several examples, we verified the effectiveness of
our algorithm. The more realistic scenario where sensor coverages are correlated
to each other shall be addressed in a future work.

\selectlanguage{american}%
\bibliographystyle{plain}
\bibliography{image_ref}
\selectlanguage{english}%

\end{document}